\newtheorem{defn}{Definition}[section]
\newtheorem{thm}{Theorem}[section]
\newtheorem{lemma}{Lemma}[section]
\newtheorem{note}{Note}[section]
\newtheorem{cor}{Corollary}[section]
\theoremstyle{definition}
\numberwithin{equation}{section}
\begin{document}
\title{The number of $k$-tons in the coupon collector problem}
\author{J.C. Saunders}
\date{}
\maketitle
\begin{abstract}
Consider the coupon collector problem where each box of a brand of cereal contains a coupon and there are $n$ different types of coupons. Suppose that the probability of a box containing a coupon of a specific type is  $1/n$ and that we keep buying boxes until we collect at least $m$ coupons of each type. For $k\geq m$ call a certain coupon a $k$-ton if we see it $k$ times by the time we have seen $m$ copies of all of the coupons. Here we determine the asymptotic distribution of the number of $k$-tons after we have collected $m$ copies of each coupon for any $k$ in a restricted range, given any fixed $m$. We also determine the asymptotic joint probability distribution over such values of $k$ and the total number of coupons collected. 
\end{abstract}
\section{Introduction}
Consider the coupon collector problem where each box of a brand of cereal contains a coupon and there are $n$ different types of coupons. Suppose that the probability of a box containing a coupon of a specific type is $\frac{1}{n}$ and that we keep buying boxes until we collect at least one coupon of each type. It is well known, see for example \cite{newman}, that the expected number of boxes we need to buy is $nH_n$, where $H_n$ is the $n$th Harmonic number:
\begin{equation*}
H_n=1+\frac{1}{2}+\frac{1}{3}+\ldots+\frac{1}{n}=\log n+\gamma+\frac{1}{2n}+O\left(\frac{1}{n^2}\right),
\end{equation*} 
where $\gamma$ is the Euler-Mascheroni constant. The expected value of the total number of boxes collected has been extensively studied. For instance, Newman and Shepp \cite{newman} proved that, if you continue to collect boxes until you have at least $m$ coupons of each type, then the expected number of boxes collected is $n\log n+(m-1)n\log\log n+n\cdot C_m+o(n)$, where $C_m$ is a constant depending on $m$. 
\begin{defn}
The Gumbel distribution with parameters $\mu$ and $\beta$ denoted as $\mathrm{Gumbel}(\mu,\beta)$ is defined as the probability distribution with cumulative density function $e^{e^{-(x-\mu)/\beta}}$ for $-\infty<x<\infty$.
\end{defn}
Erd\H{o}s and R\'{e}nyi \cite{erdos} improved the result above and proved that, if $v_m(n)$ is the total number of boxes collected, then $\frac{v_m(n)}{n}-\log n-(m-1)\log\log n\sim\mathrm{Gumbel}(-\log((m-1)!),1)$ as $n\rightarrow\infty$. Baum and Billingsley \cite{baum} studied the probability distribution of the number of boxes needed to be collected to collect $a_n+1$ coupons, where $0\leq a_n<n$. Berenbrink and Sauerwald \cite{berenbrink} examined the expected number of boxes needed to be collected if the types of coupons have different probabilities of being in a box, and Neal \cite{neal} and Doumas and Papanicolaou \cite{doumas} looked at the actual probability distribution of the number of boxes collected if the types of coupons have different probabilities. Doumas and Papanicolaou \cite{doumas2} also examined the case of collecting at least $m$ coupons of each type. Foata and Han \cite{foata} and Myers and Wilf \cite{myers} studied further variations of the problem, where you have multiple collectors.
\newline
\newline
Here we examine the situation where we continue to collect boxes until we have at least $m$ copies of each type of coupons for any fixed $m\in\mathbb{N}$.
\begin{defn}
For all $k\geq m$, call a certain coupon a $k$-ton if we see it $k$ times by the time we have seen $m$ copies of all of the coupons. Let the number of $k$-tons be denoted as $S_k$.
\end{defn}
In the case of $m=1$, Myers and Wilf \cite{myers} determined the expected number of $1$-tons, which they called singletons, to be $H_n$. Penrose \cite{penrose} also studied singletons, but in the case of unequal coupon probabilities and where the number of coupons in total collected is proportional to the number of types of coupons, and noted many applications to their study, such as in data bases, biological particles, and communication channels. Here we extend Myers' and Wilf's result by determining the asymptotic joint probability distribution over values of $k$ in a restricted range and the total number of coupons collected assuming, like Myers and Wilf, equal coupon probabilities. We also determine the asymptotic distribution of the number of $k$-tons after we have collected $m$ copies of each coupon for any such values of $k$, given any fixed $m\in\mathbb{N}$. While making an analogous extension to Penrose's work is certainly interesting, we leave it for a future project.
\begin{thm}\label{thm3}
Fix $m\in\mathbb{N}$ and let $T_m(n)$ be the total number of coupons collected to see at least $m$ copies of each coupon and let $n\rightarrow\infty$. Let $k=o(\log n)$ if $m=1$ and $k=o\left(\frac{\log n}{\log\log n}\right)$ if $m\geq 2$, with $k\geq m$ in either case. Then the joint probability distribution of $\left(\frac{T_m(n)-n\log n-(m-1)n\log\log n}{n},\frac{k!S_k}{(\log n)^{k-m+1}}\right)$ converges to $\left(X,e^{-X}\right)$, where $X\sim\mathrm{Gumbel}(-\log((m-1)!),1)$.
\end{thm}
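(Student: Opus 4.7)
My plan is to \emph{Poissonize}: couple the discrete coupon draws to $n$ independent rate-$1/n$ Poisson processes $N_1(\cdot),\dots,N_n(\cdot)$ so that, if $\tau_i^{(m)}$ denotes the $m$-th jump of $N_i$ and $T^* := \max_i \tau_i^{(m)}$, then $(T, S_m, S_{m+1},\dots)$ has the same law as $(\sum_i N_i(T^*), \#\{i:N_i(T^*)=m\},\dots)$ in the Poissonized model. Because the superposed arrival process has rate $1$ and $T^* \asymp n\log n$, one has $T - T^* = O_P(\sqrt{n\log n}) = o_P(n)$, so the scaled quantities $W := (T^* - n\log n - (m-1)n\log\log n)/n$ and $(T - n\log n - (m-1)n\log\log n)/n$ share the same distributional limit; by the Erd\H{o}s--R\'enyi theorem cited above, this limit is $X \sim \mathrm{Gumbel}(-\log((m-1)!),1)$.

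The core step is to describe the conditional law of $S_k$ given $T^*$. Almost surely the maximum $T^*$ is attained by a unique index $I^*$; by symmetry $P(I^* = i \mid T^*) = 1/n$. Conditional on $\{T^* = t,\; I^* = 1\}$, the processes $(N_j)_{j \neq 1}$ remain mutually independent, each being a Poisson process conditioned only on $\{N_j(t) \geq m\}$, while $N_1(t) = m$ is forced. Consequently, for any $k \geq m$,
\[
S_k - \mathbf{1}[k=m] \mid T^* = t \;\sim\; \mathrm{Bin}(n-1,\; p_{k,t}), \qquad p_{k,t} = \frac{e^{-t/n}(t/n)^k/k!}{P(\mathrm{Poisson}(t/n) \geq m)}.
\]
Writing $t/n = \log n + (m-1)\log\log n + x + o(1)$ and using $P(\mathrm{Poisson}(t/n)\geq m) = 1 - O(1/n)$, a careful expansion gives
\[
(n-1)\,p_{k,t} = \frac{(\log n)^{k-m+1}}{k!}\,e^{-x}\,\bigl(1 + \tfrac{(m-1)\log\log n + O(1)}{\log n}\bigr)^{k}(1+o(1)).
\]
The hypothesis on $k$ is precisely what forces the inner factor to be $1+o(1)$: for $m \geq 2$ the bound $k = o(\log n/\log\log n)$ makes $k\log\log n/\log n = o(1)$, while for $m = 1$ the $\log\log n$ term is absent so the weaker $k = o(\log n)$ suffices. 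Thus $E[S_k \mid T^*] = \frac{(\log n)^{k-m+1}}{k!}\,e^{-W}(1+o(1))$, and $\mathrm{Var}(S_k \mid T^*) \leq E[S_k \mid T^*]$.

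A short check shows $(\log n)^{k-m+1}/k! \to \infty$ whenever $k = o(\log n)$ with $k \geq m$, so $E[S_k \mid T^*] \to \infty$ in probability; Chebyshev's inequality applied conditionally on $T^*$ then yields $\frac{k!S_k}{(\log n)^{k-m+1}} - e^{-W} \to 0$ in probability. Combined with $W \to X$ in distribution and Slutsky's theorem, this gives the joint convergence $\bigl(W,\; k!S_k/(\log n)^{k-m+1}\bigr) \to (X, e^{-X})$, and the first paragraph upgrades $T^*$ back to $T$. The hard part of the proof is the uniform bookkeeping in the estimate of $p_{k,t}$: the $(1+o(1))$ error must hold simultaneously for all $t$ in a Gumbel window of width $\Theta(n)$, and the stated range of $k$ is essentially the sharp threshold at which $(1+O(\log\log n/\log n))^k = 1+o(1)$ ceases to hold.
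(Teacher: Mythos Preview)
Your argument is correct and takes a genuinely different route from the paper's. The paper stays entirely in the discrete model: its key lemma computes, for a \emph{deterministic} number of draws $N(n)=n\log n+(m-1)n\log\log n+f(n)$, that $k!S_{k,N(n)}/(\log n)^{k-m+1}$ concentrates at $e^{-f(n)/n}$ via a direct first/second-moment calculation, and the theorem is then deduced by a sandwich argument that bounds the ``crossing'' probabilities $P(T<N(n,x),\ k!S_k/(\log n)^{k-m+1}<e^{-y})$ using auxiliary thresholds $M(n),M'(n)$ at $n\log n+(m-1\pm\tfrac12)n\log\log n$. You instead Poissonize, which buys you an exact conditional law: given $T^*$ (and the argmax index), the remaining counts are i.i.d.\ Poisson conditioned on being $\geq m$, so $S_k-\mathbf{1}[k=m]$ is binomial and the concentration around $e^{-W}(\log n)^{k-m+1}/k!$ follows immediately from the binomial variance bound, uniformly on compacta of $W$; Slutsky and $T-T^*=o_P(n)$ then finish. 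Your approach is structurally cleaner---the role of the hypothesis on $k$ as the exact threshold for $(1+O(\log\log n/\log n))^k=1+o(1)$ is transparent, and there is no need for the somewhat ad hoc sandwich---at the cost of importing the Poisson coupling and the martingale estimate $\mathrm{Var}(T-T^*)=E[T^*]$. The paper's proof is more self-contained and elementary but hides the conditional-independence structure that your argument exploits directly.
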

A simple heuristic argument for Theorem \ref{thm3} may be given as follows. First, by Erd\H{o}s and R\'{e}nyi \cite{erdos}, we know that $\frac{T_m(n)-n\log n-(m-1)n\log\log n}{n}\sim\mathrm{Gumbel}(-\log((m-1)!),1)$. Later on we also see unsurprisingly that the number of times a specific type of coupon is seen is essentially independent of whether or not all types of coupons have been collected at least $m$ times. Also, unsurprisingly, it turns out that the number of times two different types of coupons are seen is asymptotically independent. Since the probability of a type of coupon being seen $k$ times is
\begin{equation*}
\binom{T_m(n)}{k}\left(\frac{1}{n}\right)^{k}\left(1-\frac{1}{n}\right)^{T_m(n)-k}\sim\frac{(n\log n)^ke^{-\log n-(m-1)\log\log n-x}}{k!n^k}=\frac{(\log n)^{k-m+1}}{ne^xk!},
\end{equation*}
where $x=\frac{T_m(n)-n\log n-(m-1)n\log\log n}{n}$, the expected number of $k$-tons is $\sim\frac{(\log n)^{k-m+1}}{e^xk!}$, so that $\frac{k!S_k}{(\log n)^{k-m+1}}\sim e^{-x}$.
\newline
\newline
While Theorem \ref{thm3} provides us with the joint distribution between the total number of coupons collected and the number of $k$-tons for minimal $k$, we can also ask what happens for maximum values of $k$. The answer to this is provided in the next theorem. 
\begin{thm}\label{thm5}
Fix $m\in\mathbb{N}$ and let $T_m(n)$ be the total number of coupons collected to see at least $m$ copies of each coupon.
\begin{enumerate}
\item Fix $d\in\mathbb{R}$. Pick an increasing sequence $(n_j)_j\subset\mathbb{N}$ and a sequence $(d_j)_j\subset\mathbb{R}$ such that $\lim_{j\rightarrow\infty}d_j=d$ and $k_j=e\log n_j+\left((e-1)(m-1)-\frac{1}{2}\right)\log\log n_j+d_j\in\mathbb{N}$ for all $j\in\mathbb{N}$. Then the joint probability distribution of $\left(\frac{T_m\left(n_j\right)-n_j\log n_j-(m-1)n_j\log\log n_j}{n_j},S_{k_j}\right)$ converges to $\left(X,Pois\left(\frac{e^{(e-1)X-d}}{\sqrt{2\pi e}}\right)\right)$, where $X\sim\mathrm{Gumbel}(-\log((m-1)!),1)$.
\item Let $g(n)=o\left(\log\log n\right)$ with $\lim_{n\rightarrow\infty}g(n)=\infty$ such that
\begin{equation*}
k=e\log n+\left((e-1)(m-1)-\frac{1}{2}\right)\log\log n+g(n)\in\mathbb{N}
\end{equation*}
for all $n\in\mathbb{N}$. Then $S_k$ converges to $0$ with probability $1$.
\item Let $g(n)=o\left(\log\log n\right)$ with $\lim_{n\rightarrow\infty}g(n)=\infty$ such that
\begin{equation*}
k=e\log n+\left((e-1)(m-1)-\frac{1}{2}\right)\log\log n-g(n)\in\mathbb{N}
\end{equation*}
for all $n\in\mathbb{N}$. Then the value of $P\left(S_k=0\right)$ converges to $0$.
\end{enumerate}
\end{thm}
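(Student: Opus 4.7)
The plan is to work in the Poissonized model, where each type $i$ receives arrivals as an independent Poisson process of rate $1/n$, so that $N_i(t)\sim\mathrm{Poisson}(t/n)$ independently. Let $\sigma_i$ be the time of type $i$'s $m$-th arrival, so $\sigma_i\sim\mathrm{Gamma}(m,1/n)$ independently, and $\tau=\max_i\sigma_i$. Standard de-Poissonization transfers conclusions between the discrete and Poisson models. The central computation is a careful Stirling expansion of
\begin{equation*}
n\cdot\frac{e^{-\tau/n}(\tau/n)^k}{k!}
\end{equation*}
given $X:=(\tau-n\log n-(m-1)n\log\log n)/n$. Substituting $\tau/n=\log n+(m-1)\log\log n+X$ and $k=e\log n+((e-1)(m-1)-\tfrac{1}{2})\log\log n+d$, all $\log n$ and $\log\log n$ terms cancel (the $-\tfrac{1}{2}\log\log n$ shift in $k$ being exactly what balances the $-\tfrac{1}{2}\log(2\pi k)$ from Stirling), leaving the finite limit $\mu(X):=e^{(e-1)X-d}/\sqrt{2\pi e}$, with the constant $1/\sqrt{2\pi e}$ coming from the residual $e^{-\frac{1}{2}\log(2\pi e)}$. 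This asymptotic underlies all three parts.

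For part~1, I would promote this first-moment calculation to a conditional Poisson limit theorem via the factorial moment method. Fix $r\geq 1$ and set $\tau^{(r)}:=\max_{i>r}\sigma_i$, which is independent of $(\sigma_1,\dots,\sigma_r)$ and hence of $(N_1(t),\dots,N_r(t))_{t\leq\tau^{(r)}}$. A Gamma tail estimate gives $P(\sigma_i>\tau^{(r)})=O(1/n)$ for each $i$, so a union bound yields $P(\tau\neq\tau^{(r)})\to 0$ uniformly on compact $X$-sets, and by Theorem~\ref{thm3} both $X$ and $X^{(r)}:=(\tau^{(r)}-n\log n-(m-1)n\log\log n)/n$ share the same Gumbel limit. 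On the event $\{\tau=\tau^{(r)}\}$, the events $\{N_i(\tau)=k_j\}$ for $i\leq r$ are conditionally independent given $\tau^{(r)}$, each with probability $\sim\mu(X)/n$, so by exchangeability
\begin{equation*}
E\bigl[(S_{k_j})_r\bigm| X\bigr]=n(n-1)\cdots(n-r+1)\,P(N_1=k_j,\dots,N_r=k_j\mid X)\longrightarrow\mu(X)^r,
\end{equation*}
which are the factorial moments of $\mathrm{Poisson}(\mu(X))$. Joint convergence then follows either by applying the method of moments to $E[f(X)(S_{k_j})_r]$ for polynomial $f$, or by bounded convergence applied to the conditional characteristic functions.

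Part~2 needs only the first moment. With $d$ replaced by $g(n)\to\infty$, the expansion gives $E[S_k\mid X]\to 0$ pointwise in $X$. Because $\mu$ grows exponentially in $X$ while $X$ has a Gumbel right tail (so $E[e^{(e-1)X}]=\infty$), I cannot integrate directly; instead, for each $A>0$ bound $P(S_k\geq 1)\leq P(X>A)+E[S_k\mathbb{1}\{X\leq A\}]$, send the second term to zero using the expansion uniformly on $\{X\leq A\}$, and then let $A\to\infty$ to kill the first. This gives $P(S_k=0)\to 1$. Part~3 is the dual: with $d$ replaced by $-g(n)$, one has $E[S_k\mid X]\to\infty$. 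Extending the factorial moment calculation, $E[(S_k)_2\mid X]\sim E[S_k\mid X]^2$ pointwise, so Paley--Zygmund (equivalently, the Poisson approximation from part~1 pushed to diverging means) gives $P(S_k=0\mid X)\to 0$ pointwise, and dominated convergence yields $P(S_k=0)\to 0$.

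The main obstacle is the decoupling step: rigorously establishing $\tau=\tau^{(r)}$ with probability tending to $1$ uniformly enough over the relevant $X$-range that the independence of $\tau^{(r)}$ from $(N_1,\dots,N_r)$ can be used for the joint limit theorem, not merely marginally. A secondary difficulty is running the Stirling expansion with enough precision to produce the constant $1/\sqrt{2\pi e}$: second-order terms in expansions of $\log k$ and $\log(\tau/n)$ survive multiplication by $k\sim e\log n$ unless handled carefully, and the particular coefficient $(e-1)(m-1)-\tfrac{1}{2}$ in $k$ is precisely calibrated to cancel the $\log\log n$ contributions that would otherwise diverge.
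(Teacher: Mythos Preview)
Your approach is sound and takes a genuinely different route from the paper's. The paper works entirely in the discrete model: its key step, Lemma~\ref{lem1}, computes $P(S_{m-1,M}=r_1,\,S_{k,M}=r_2)$ for a \emph{deterministic} sample size $M$ by inclusion--exclusion, then uses $\{T=M\}\subseteq\{S_{m-1,M-1}=1,\,S_{m-1,M}=0\}$ to sum over $M\in[N(n,x),N(n,y)]$, bounds the $S_k$-dependent factor crudely on that window, and recognizes the remaining Riemann sum as the Gumbel density. Your Poissonization replaces this with the $\tau\mapsto\tau^{(r)}$ decoupling, which manufactures genuine conditional independence (indeed $\{N_1(\tau)=\cdots=N_r(\tau)=k_j\}\subset\{\tau=\tau^{(r)}\}$ once $k_j>m$, so the decoupling is exact on the relevant event) and makes the Poisson limit fall out of factorial moments; for parts~2 and~3 your first-moment-with-truncation and Paley--Zygmund arguments run parallel to the paper's first-moment and Chebyshev estimates. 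One caution on part~1: the unconditional route through $E[f(X)(S_{k_j})_r]$ for polynomial $f$ fails because $E[\mu(X)^r]=E\bigl[e^{r(e-1)X}\bigr]/(2\pi e)^{r/2}=\infty$ for every $r\geq 1$ (the Gumbel right tail is only $e^{-x}$ while $r(e-1)>1$), so you must truncate to $\{|X|\leq A\}$ there too, exactly as you already do in part~2, or pass through bounded test functions as in your characteristic-function alternative. With that adjustment the argument goes through; the paper's route is more elementary and delivers explicit error rates like $O((\log\log n)^2/\log n)$, while yours is cleaner structurally and would extend more readily to, say, joint limits across several values of $k$ near $e\log n$.
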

A simple heuristic argument for Theorem \ref{thm5} may be given as follows. The probability of a type of coupon being seen $k_j$ times is
\begin{equation*}
\binom{T_m\left(n_j\right)}{k_j}\left(\frac{1}{n_j}\right)^{k_j}\left(1-\frac{1}{n_j}\right)^{T_m\left(n_j\right)-k_j}.
\end{equation*}
Assuming that $x=\frac{T_m\left(n_j\right)-n_j\log n_j-(m-1)n_j\log\log n_j}{n_j}$ is constant as $j\rightarrow\infty$ and $k_j$ is as defined in Theorem \ref{thm5}, we have that the above is asymptotic to
\begin{align*}
\frac{1}{\sqrt{2\pi k_j}}\left(\frac{eT_m\left(n_j\right)}{n_jk_j}\right)^{k_j}e^{-\frac{T_m\left(n_j\right)}{n_j}}&\sim\frac{1}{\sqrt{2\pi e\log n_j}}e^{\frac{T_m\left(n_j\right)e-k_jn_j-T_m\left(n_j\right)}{n_j}}\\
&=\frac{1}{\sqrt{2\pi e\log n_j}}e^{-\log n_j+\frac{\log\log n_j}{2}+(e-1)x-d_j}\\
&=\frac{1}{\sqrt{2\pi e}n_j}e^{(e-1)x-d_j}.
\end{align*}
Therefore, by similar reasoning in the heuristic for Theorem \ref{thm3}, we can see that the probability that there are $y$ $k$-tons is asymptotic to
\begin{equation*}
\binom{n}{y}\left(\frac{\lambda}{n}\right)^y\left(1-\frac{\lambda}{n}\right)^{n-y}\sim\frac{\lambda^ye^{-\lambda}}{y!},
\end{equation*}
where $\frac{e^{(e-1)x-d}}{\sqrt{2\pi e}n_j}$, which gives the poisson distribution in Theorem \ref{thm5}, part 1). Parts 2) and 3) can be heuristically argued by letting $d_j$ approach $\infty$ or $-\infty$.
\newline
\newline
From Theorem \ref{thm3} we can derive the following corollaries.
\begin{cor}\label{thm1}
Fix $m\in\mathbb{N}$ and let $n\rightarrow\infty$ and let $k=o(\log n)$ if $m=1$ and $k=o\left(\frac{\log n}{\log\log n}\right)$ if $m\geq 2$, with $k\geq m$ in either case. Suppose we keep collecting coupons until we see at least $m$ copies of each coupon. Then $\frac{k!S_k}{(\log n)^{k-m+1}}$ converges in distribution to $\mathrm{Exp}\left(\frac{1}{(m-1)!}\right)$.
\end{cor}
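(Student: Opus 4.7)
The plan is to obtain the corollary as an immediate consequence of Theorem~\ref{thm3}. Joint convergence in distribution implies convergence of each marginal via the continuous mapping theorem applied to the projection onto the second coordinate, so $\frac{k!S_k}{(\log n)^{k-m+1}}$ converges in distribution to $Y:=e^{-X}$, where $X\sim\mathrm{Gumbel}(-\log((m-1)!),1)$.

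The remaining step is to identify the law of $Y$ explicitly. Using the Gumbel CDF with $\mu=-\log((m-1)!)$ and $\beta=1$, I would compute
\begin{equation*}
P(X\leq x)=\exp\bigl(-e^{-(x+\log((m-1)!))}\bigr)=\exp\bigl(-(m-1)!\,e^{-x}\bigr).
\end{equation*}
Then, since $y\mapsto -\log y$ is strictly decreasing on $(0,\infty)$, for any $y>0$,
\begin{equation*}
P(Y\leq y)=P(e^{-X}\leq y)=P(X\geq -\log y)=1-\exp\bigl(-(m-1)!\,y\bigr),
\end{equation*}
which is the CDF of the exponential distribution with rate $(m-1)!$, equivalently mean $1/(m-1)!$, matching the parametrization $\mathrm{Exp}\left(\frac{1}{(m-1)!}\right)$ used in the corollary.

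No substantive obstacle arises: all the probabilistic content lives in Theorem~\ref{thm3}, and the corollary is essentially a reformulation of the second-coordinate marginal in a more familiar distributional form. The only points that warrant any attention are (i) noting explicitly that joint convergence yields marginal convergence (so that one may legitimately discard the first coordinate), and (ii) keeping the Gumbel/exponential parametrizations consistent in the final identification.
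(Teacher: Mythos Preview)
Your approach is exactly the paper's: extract the second-coordinate marginal from Theorem~\ref{thm3} and then identify the law of $e^{-X}$ by a direct CDF computation. There is, however, an algebraic slip in your Gumbel step: $e^{-(x+\log((m-1)!))}=e^{-x}/(m-1)!$, not $(m-1)!\,e^{-x}$. Carrying this through gives
\[
P(Y\le y)=1-\exp\!\left(-\frac{y}{(m-1)!}\right),
\]
which is exponential with \emph{rate} $1/(m-1)!$ (hence mean $(m-1)!$); this is the rate parametrization the paper intends by $\mathrm{Exp}\!\left(\tfrac{1}{(m-1)!}\right)$, so your reading of the parameter as a mean is inverted. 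You even flagged parametrization consistency as the one point to watch---that warning was apt.
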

Corollary \ref{thm1} gives the asymptotic probability distribution of the number $k$-tons. If we fix $k$, we can also determine the joint asymptotic distribution between the number of $m$-tons, $m+1$-tons,..., and $k$-tons. We can see that asymptotically all of these variables are linearly dependent.
\begin{cor}\label{thm2}
Fix $k,m\in\mathbb{N}$ with $k\geq m$ and let $n\rightarrow\infty$. Suppose we keep collecting coupons until we see at least $m$ copies of each coupon. Then the joint probability distribution $\left(\frac{m!S_m}{(\log n)},\frac{(m+1)!S_{m+1}}{(\log n)^2},\ldots,\frac{k!S_k}{(\log n)^{k-m+1}}\right)$ converges to $X\cdot(1,1,\ldots,1)$ where $X\sim\mathrm{Exp}\left(\frac{1}{(m-1)!}\right)$.
\end{cor}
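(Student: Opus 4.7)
The plan is to extract this corollary from Theorem \ref{thm3} by exploiting the fact that, in the joint limit $(X, e^{-X})$, the second coordinate is a deterministic continuous function of the first. This forces all $k - m + 1$ of the rescaled $j$-ton counts (for $j = m, \ldots, k$) to collapse asymptotically onto a single random scalar.

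Set $Y_n := \frac{T - n\log n - (m-1)n\log\log n}{n}$ and $U_{n,j} := \frac{j!\,S_j}{(\log n)^{j-m+1}}$ for $j \in \{m, m+1, \ldots, k\}$. Since $k$ is fixed, this is a finite range of indices, and each one satisfies the hypothesis of Theorem \ref{thm3}. Applying that theorem separately for each $j$, the pair $(Y_n, U_{n,j})$ converges in distribution to $(X, e^{-X})$, where $X \sim \mathrm{Gumbel}(-\log((m-1)!), 1)$.

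Next I would argue that $U_{n,j} - e^{-Y_n} \to 0$ in probability for each $j$. Indeed, by the continuous mapping theorem applied to $(Y_n, U_{n,j})$, the pair $(Y_n,\, U_{n,j} - e^{-Y_n})$ converges in distribution to $(X,\, e^{-X} - e^{-X}) = (X, 0)$; convergence in distribution to a constant in the second coordinate upgrades to convergence in probability. So $U_{n,j} = e^{-Y_n} + o_P(1)$ for every $j$. Taking the finite collection $j = m, \ldots, k$ together, Slutsky's theorem gives
\[
\bigl(U_{n,m}, U_{n,m+1}, \ldots, U_{n,k}\bigr) \;=\; e^{-Y_n}\cdot(1, 1, \ldots, 1) + o_P(1),
\]
and since $Y_n \Rightarrow X$, the continuous mapping theorem yields the limit $e^{-X}\cdot(1,1,\ldots,1)$.

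Finally, a routine computation from the Gumbel CDF confirms the distributional identification: for $y > 0$,
\[
P(e^{-X} \le y) \;=\; P(X \ge -\log y) \;=\; 1 - \exp\bigl(-(m-1)!\,y\bigr),
\]
so $e^{-X} \sim \mathrm{Exp}(1/(m-1)!)$ in the paper's parametrization, matching the claimed limit. The entire argument is essentially bookkeeping on top of Theorem \ref{thm3}; the only conceptual point worth flagging is that marginal convergence of each $U_{n,j}$ would be insufficient, and one genuinely needs the \emph{joint} convergence with $Y_n$ from Theorem \ref{thm3} to line up the different coordinates.
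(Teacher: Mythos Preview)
Your argument is correct and follows the paper's approach: both derive the corollary from Theorem~\ref{thm3} together with the identification $e^{-X}\sim\mathrm{Exp}(1/(m-1)!)$ for Gumbel $X$; the paper asserts the deduction in one line, whereas you have made explicit the Slutsky/coupling step through $Y_n$ that ties the coordinates together (and you are right that marginal convergence alone would not suffice). One small slip: in your displayed CDF the exponent should be $-y/(m-1)!$ rather than $-(m-1)!\,y$, so that the rate is indeed $1/(m-1)!$ as stated.
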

\section{Proofs}
To prove Theorem \ref{thm3} we require the following notation and lemma.
\begin{note}
Suppose we collect $x$ coupons in total regardless if we have collected all $n$ types of coupons or not. Let $S_{i,x}$ denote the number of types of coupons we collected exactly $i$ times. 
\end{note}
\begin{note}
Throughout this paper, we will let $N(n,f(n)):=n\log n+(m-1)n\log\log n+f(n)$.
\end{note}
\begin{lemma}\label{lem4}
Let $f:\mathbb{N}\rightarrow\mathbb{R}$ be such that $-\frac{n\log\log n}{2}\leq f(n)\leq\frac{n\log\log n}{2}$, and $N(n,f(n))\in\mathbb{N}$ for all $n\in\mathbb{N}$. Suppose we collect $N(n,f(n))$ coupons in total regardless if we collect $m$ copies of all $n$ different kinds of coupons or not. Let $k=o(\log n)$ if $m=1$ and $k=o\left(\frac{\log n}{\log\log n}\right)$ if $m\geq 2$. If $m\leq k$, then as $n\rightarrow\infty$ $\frac{k!S_{k,N(n,f(n))}}{(\log n)^{k-m+1}}$ is asymptotic to $e^{-\frac{f(n)}{n}}$ with probability $1$.
\end{lemma}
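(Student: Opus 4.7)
The natural strategy is the second moment method. Write $S_{k,N}=\sum_{i=1}^n X_i$, where $X_i$ indicates that type $i$ appears exactly $k$ times in the $N=N(n)$ draws; the joint law of the $X_i$ is multinomial, and the marginal probability is $p_k:=\binom{N}{k}n^{-k}(1-1/n)^{N-k}$. First I would pin down $E[S_{k,N}]=np_k$ asymptotically. The expansion $\binom{N}{k}=(N^k/k!)\prod_{i=0}^{k-1}(1-i/N)=(N^k/k!)(1+O(k^2/N))$ is valid since $k^2=o(n\log n)=o(N)$, and
\[\left(1-\frac{1}{n}\right)^{N-k}=\exp\!\left(-\frac{N-k}{n}-\frac{N-k}{2n^2}-\cdots\right)\sim e^{-N/n}\]
because $N/n^2=O((\log n)/n)\to 0$ and $k/n\to 0$. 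Hence $E[S_{k,N}]\sim(N/n)^k e^{-N/n}/k!$. Substituting $N/n=\log n+(m-1)\log\log n+f(n)/n$ yields $e^{-N/n}=e^{-f/n}/(n(\log n)^{m-1})$, while $(N/n)^k=(\log n)^k(1+(m-1)\log\log n/\log n+f/(n\log n))^k$. The hypothesis on $k$, split into the $m=1$ and $m\ge 2$ cases precisely because the dominant correction inside the bracket differs, is exactly what is needed to send this last factor to $1$, giving
\[E[S_{k,N}]\sim\frac{(\log n)^{k-m+1}}{k!}\,e^{-f(n)/n}.\]

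To control fluctuations I would compute the second factorial moment
\[E[S_{k,N}(S_{k,N}-1)]=n(n-1)\binom{N}{k,k,N-2k}n^{-2k}\left(1-\tfrac{2}{n}\right)^{N-2k},\]
which after simplification yields
\[\frac{E[S_{k,N}(S_{k,N}-1)]}{E[S_{k,N}]^2}=\frac{n-1}{n}\prod_{i=0}^{k-1}\!\left(1-\frac{k}{N-i}\right)\cdot\frac{(1-2/n)^{N-2k}}{(1-1/n)^{2(N-k)}}.\]
The product is $1+O(k^2/N)$, and Taylor expanding the logarithm of the last ratio gives $\exp(2k/n-N/n^2+O(N/n^3))=1+o(1)$ in the present regime; the entire ratio therefore tends to $1$. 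From $\operatorname{Var}(S_{k,N})=E[S_{k,N}(S_{k,N}-1)]-E[S_{k,N}]^2+E[S_{k,N}]$ I then obtain $\operatorname{Var}(S_{k,N})=o(E[S_{k,N}]^2)+E[S_{k,N}]$. Since $E[S_{k,N}]\to\infty$ throughout the allowed range (Stirling gives $(\log n)^{k-m+1}/k!\to\infty$ whenever $k=o(\log n)$, and $e^{-f/n}\ge(\log n)^{-1/2}$), it follows that $\operatorname{Var}(S_{k,N})/E[S_{k,N}]^2\to 0$. Chebyshev's inequality then yields $S_{k,N}/E[S_{k,N}]\to 1$ in probability, which combined with the mean asymptotic is the claim.

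The main technical hurdle is the mean bookkeeping when $m=1$, where the hypothesis $k=o(\log n)$ is strictly weaker than $k=o(\log n/\log\log n)$. There the factor $(1+f/(n\log n))^k$ cannot be sent to $1$ on its own, since $|f/n|$ may be as large as $(\log\log n)/2$; one must combine it with $e^{-f/n}$ and use a careful Taylor analysis of the joint exponent $k\log(1+f/(n\log n))-f/n$ to show that it is asymptotic to $-f/n$ with a remainder producing only a $1+o(1)$ multiplicative error. The variance argument is by contrast robust and recycles the same Taylor expansions used for the mean, and the final Chebyshev step is unaffected. I read the phrase \emph{with probability $1$} in the statement as convergence in probability of the ratio to $1$, which is what the second moment method naturally delivers for a sequence of independent experiments indexed by $n$.
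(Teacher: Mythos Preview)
Your approach---first and second moment, then Chebyshev---is exactly the paper's; the paper works with $E[S_{k,N}^2]$ rather than the factorial moment, but that is cosmetic.

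You go further than the paper in isolating the $m=1$ bookkeeping, which the paper simply glosses over in passing from $N(n)^k/n^k$ to $(\log n)^k$. However, your proposed fix does not actually work. Under $k=o(\log n)$ and $|f(n)/n|\le(\log\log n)/2$, the quantity $k\log\!\bigl(1+f/(n\log n)\bigr)$ need not be $o(1)$: take $k\sim(\log n)/\sqrt{\log\log n}$ and $f/n=(\log\log n)/2$, and it is of order $\sqrt{\log\log n}$. Combining with $-f/n$ does nothing, since $-f/n$ is the target, not a cancelling term; the joint exponent is $-f/n$ plus something that can diverge, so the claimed $1+o(1)$ multiplicative remainder fails. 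In other words, the stated mean asymptotic is not literally correct at this edge of the hypotheses, and the paper's proof has the same gap. Fortunately this does not damage the downstream use in Theorem~\ref{thm3}: there one only needs $k!\,E[S_{k,\lceil M(n)\rceil}]/(\log n)^{k-m+1}\to\infty$ (and the analogous statement at $M'(n)$), and the stray factor $(1+f/(n\log n))^k$ is $e^{o(\log\log n)}$, which is swamped by the $\sqrt{\log n}$ coming from $e^{-f/n}$. Your variance and Chebyshev steps are unaffected.
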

\begin{proof}
First, assume that $m\leq k$. We can see that as $n\rightarrow\infty$
\begin{align}
E(S_{k,N(n,f(n))})&=n\cdot P(c_0\text{ occurs exactly }k\text{ times in the collection of }N(n,f(n))\text{ coupons})\nonumber\\
&=\binom{N(n,f(n))}{k}\left(1-\frac{1}{n}\right)^{N(n,f(n))-k}\frac{1}{n^{k-1}}\nonumber\\
&\sim\frac{N(n,f(n))^k}{k!n^{k-1}}e^{-\log n-(m-1)\log\log n-\frac{f(n)}{n}}\nonumber\\
&\sim\frac{(\log n)^{k-m+1}}{k!e^{\frac{f(n)}{n}}}.\label{eqn3}
\end{align}
Thus, we can deduce
\begin{equation}\label{eqn1}
E\left(\frac{k!S_{k,N(n,f(n))}}{(\log n)^{k-m+1}}\right)=e^{-\frac{f(n)}{n}}(1+o(1))
\end{equation}
as $n\rightarrow\infty$. Also,
\begin{align}
E(S_{k,N(n,f(n))}^2)&=n(n-1)\cdot P(c_0,c_1\text{ both occur exactly }k\text{ times in the collection of }N(n,f(n))\text{ coupons})\nonumber\\
&\quad+n\cdot P(c_0\text{ occurs exactly }k\text{ times in the collection of }N(n,f(n))\text{ coupons})\nonumber\\
&=n(n-1)\binom{N(n,f(n))}{k}\binom{N(n,f(n))-k}{k}\left(1-\frac{2}{n}\right)^{N(n,f(n))-2k}\frac{1}{n^{2k}}\nonumber\\
&\quad+\binom{N(n,f(n))}{k}\left(1-\frac{1}{n}\right)^{N(n,f(n))-k}\frac{1}{n^{k-1}}\nonumber\\
&\sim\frac{(\log n)^{2k-2m+2}}{(k!)^2e^{\frac{2f(n)}{n}}}+\frac{(\log n)^{k-m+1}}{k!e^{\frac{f(n)}{n}}}.\label{eqn2}
\end{align}
Notice that
\begin{equation}\label{eqn4}
\frac{k!e^{\frac{f(n)}{n}}}{(\log n)^{k-m+1}}\leq\frac{k!}{(\log n)^{k-m+\frac{1}{2}}}.
\end{equation}
Notice that if $k$ is bounded as $n\rightarrow\infty$, then the right-hand side of \eqref{eqn4} tends toward $0$. On the other hand for $k$ sufficiently large, we have $k!<k^{k-m}$, so even if values of $k$ tend toward $\infty$, the right-hand side of $\eqref{eqn4}$ tends toward $0$. Thus, from \eqref{eqn2}, we obtain
\begin{equation*}
E\left(\left(\frac{k!S_{k,N(n,f(n))}}{(\log n)^{k-m+1}}\right)^2\right)=e^{-\frac{2f(n)}{n}}(1+o(1))
\end{equation*}
as $n\rightarrow\infty$. Thus
\begin{equation*}
\mathrm{Var}\left(\frac{k!S_{k,N(n,f(n))}}{(\log n)^{k-m+1}}\right)=o(1)
\end{equation*}
as $n\rightarrow\infty$ so that we obtain our result for $m\leq k$.
\end{proof}
\begin{proof}[Proof of Theorem \ref{thm3}]
Erd\H{o}s and R\'{e}nyi \cite{erdos} have proved that $X=\frac{T_m(n)-n\log n-(m-1)n\log\log n}{n}$ is asymptotically Gumbel distributed. To prove Theorem \ref{thm3} it therefore suffices to show that
\begin{equation}\label{xy1}
\lim_{n\rightarrow\infty}P\left(\frac{T_m(n)-n\log n-(m-1)n\log\log n}{n}<x,\frac{k!S_k}{(\log n)^{k-m+1}}<e^{-y}\right)=0
\end{equation}
for any fixed $x<y$ and that
\begin{equation}\label{xy2}
\lim_{n\rightarrow\infty}P\left(\frac{T_m(n)-n\log n-(m-1)n\log\log n}{n}>x,\frac{k!S_k}{(\log n)^{k-m+1}}>e^{-y}\right)=0
\end{equation}
for any fixed $y<x$. We first prove \eqref{xy1}. To prove \eqref{xy1} we first calculate an upper bound for
\begin{equation*}
P\left(T_m(n)\leq N(n,nx),\frac{k!S_k}{(\log n)^{k-m+1}}<e^{-y}\right)
\end{equation*}
for all sufficiently large $n\in\mathbb{N}$. Let $M(n)=n\log n+\left(m-\frac{3}{2}\right)n\log\log n$. We have
\begin{align*}
&\quad P\left(T_m(n)\leq N(n,nx),\frac{k!S_k}{(\log n)^{k-m+1}}<e^{-y}\right)\\
&=P\left(T_m(n)< M(n),\frac{k!S_k}{(\log n)^{k-m+1}}<e^{-y}\right)+P\left(M(n)\leq T_m(n)\leq N(n,nx),\frac{k!S_k}{(\log n)^{k-m+1}}<e^{-y}\right)\\
&\leq P(T_m(n)< M(n))+P\left(M(n)\leq T_m(n)\leq N(n,nx),\frac{k!S_k}{(\log n)^{k-m+1}}<e^{-y}\right).
\end{align*}
Suppose it is true that $M(n)\leq T\leq N(n,nx)$ and $\frac{k!S_k}{(\log n)^{k-m+1}}<e^{-y}$. This implies that there exists some $M(n)\leq r\leq N(n,nx)$ such that after collecting the first $r$ coupons we have $\frac{k!S_{k,r}}{(\log n)^{k-m+1}}<e^{-y}$. Then we see that we can take $r=\lceil M(n)\rceil$. Therefore, we have
\begin{equation*}
P\left(T_m(n)\leq N(n,nx),\frac{k!S_k}{(\log n)^{k-m+1}}<e^{-y}\right)\leq P(T_m(n)< M(n))+P\left(\frac{k!S_{k,\lceil M(n)\rceil}}{(\log n)^{k-m+1}}<e^{-y}\right).
\end{equation*}
Now we let $n\rightarrow\infty$. From Erd\H{o}s and R\'{e}nyi's result that $X=\frac{T_m(n)-n\log n-(m-1)n\log\log n}{n}$ is asymptotically Gumbel distributed, we can deduce that
\begin{equation*}
\lim_{n\rightarrow\infty}P(T< M(n))=0.
\end{equation*}
From Lemma \ref{lem4}, we can deduce that as $n\rightarrow\infty$ $\frac{k!S_{k,\lceil M(n)\rceil}}{(\log n)^{k-m+1}}$ is asymptotic to $e^{\frac{\log\log n}{2}}=\sqrt{\log n}$ with probability $1$. Therefore,
\begin{equation*}
\lim_{n\rightarrow\infty}P\left(\frac{k!S_{k,\lceil M(n)\rceil}}{(\log n)^{k-m+1}}<e^{-y}\right)=0.
\end{equation*}
Thus, we obtain \eqref{xy1}. \eqref{xy2} is proved similarly, replacing $M(n)=n\log n+\left(m-\frac{3}{2}\right)n\log\log n$ with $M'(n)=n\log n+\left(m-\frac{1}{2}\right)n\log\log n$ and reversing the appropriate inequalities.
\end{proof}
\begin{proof}[Proof of Corollaries \ref{thm1} and \ref{thm2}]
Corollaries \ref{thm1} and \ref{thm2} follow from Theorem \ref{thm3} by showing that, if $X\sim\mathrm{Gumbel}(-\log((m-1)!),1)$, then $e^{-X}\sim\mathrm{Exp}\left(\frac{1}{(m-1)!}\right)$. Indeed, we have the following for any $r>0$:
\begin{align*}
P\left(e^{-X}\leq r\right)&=P(X\geq -\log r)\\
&=1-P(X\leq -\log r)\\
&=1-e^{-e^{-(-\log r+\log(m-1)!)}}\\
&=1-e^{-\frac{r}{(m-1)!}}.
\end{align*}
Hence, the result follows.
\end{proof}
To prove Theorem \ref{thm5}, we require the following lemma.
\begin{lemma}\label{lem1}
Fix $m\in\mathbb{N}$. Let $f:\mathbb{N}\rightarrow\mathbb{R}$ be such that $f(n)=O(n)$ and $N(n,f(n))\in\mathbb{N}$ for all $n\in\mathbb{N}$. Suppose we collect $N(n,f(n))$ coupons in total regardless if we collect $m$ copies of all $n$ different kinds of coupons or not. Pick an increasing sequence $(n_j)_j\subset\mathbb{N}$ and a sequence $(d_j)_j\subset\mathbb{R}$ such that $\lim_{j\rightarrow\infty}d_j=d$ for some $d\in\mathbb{R}$ and $k=e\log n_j+\left((e-1)(m-1)-\frac{1}{2}\right)\log\log n_j+d_j\in\mathbb{N}$ for all $j\in\mathbb{N}$. Then as $j\rightarrow\infty$ we have
\begin{align*}
&\quad P\left(S_{m-1,N\left(n_j,f\left(n_j\right)\right)}=r_1,S_{k,N\left(n_j,f\left(n_j\right)\right)}=r_2\right)\\
&=\frac{1}{r_1!r_2!}\left(\frac{e^{-\frac{f\left(n_j\right)}{n_j}}}{(m-1)!}\right)^{r_1}e^{\frac{-e^{-\frac{f\left(n_j\right)}{n_j}}}{(m-1)!}}\left(\frac{e^{\frac{(e-1)f\left(n_j\right)}{n_j}-d}}{\sqrt{2\pi e}}\right)^{r_2}e^{\frac{-e^{\frac{(e-1)f\left(n_j\right)}{n_j}-d}}{\sqrt{2\pi e}}}\left(1+O\left(\frac{\left(\log\log n_j\right)^2}{\log n_j}\right)\right),
\end{align*}
where the implied constant in the error term only depends upon $r_1$ and $r_2$.
\end{lemma}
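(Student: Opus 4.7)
The plan is to compute the joint factorial moments
\begin{equation*}
M_{s_1,s_2}(n_j):=E\bigl[(S_{m-1,N(n_j)})_{(s_1)}(S_{k,N(n_j)})_{(s_2)}\bigr]
\end{equation*}
and recover the joint mass function by the M\"obius inversion
\begin{equation*}
P\bigl(S_{m-1,N(n_j)}=r_1,\,S_{k,N(n_j)}=r_2\bigr)=\frac{1}{r_1!r_2!}\sum_{\substack{s_1\geq r_1\\ s_2\geq r_2}}\frac{(-1)^{s_1+s_2-r_1-r_2}}{(s_1-r_1)!(s_2-r_2)!}M_{s_1,s_2}(n_j).
\end{equation*}
Counting ordered $(s_1+s_2)$-tuples of distinct coupons forced to be $(m-1)$-tons or $k$-tons gives the closed form
\begin{equation*}
M_{s_1,s_2}(n_j)=\frac{n_j!}{(n_j-s_1-s_2)!}\cdot\frac{N(n_j)!\,(n_j-s_1-s_2)^{N(n_j)-\beta}}{((m-1)!)^{s_1}(k!)^{s_2}(N(n_j)-\beta)!\,n_j^{N(n_j)}},
\end{equation*}
where $\beta=s_1(m-1)+s_2 k$. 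The target is $M_{s_1,s_2}(n_j)=\lambda_{m-1}^{s_1}\lambda_k^{s_2}\bigl(1+O((\log\log n_j)^2/\log n_j)\bigr)$ with $\lambda_{m-1}=e^{-f(n_j)/n_j}/(m-1)!$ and $\lambda_k=e^{(e-1)f(n_j)/n_j-d}/\sqrt{2\pi e}$; inserting this into the inversion formula and recognizing the double series as $e^{-\lambda_{m-1}}e^{-\lambda_k}\cdot\lambda_{m-1}^{r_1}\lambda_k^{r_2}/(r_1!r_2!)$ produces the two-factor Poisson form claimed in the lemma.

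All factors in $M_{s_1,s_2}$ admit routine asymptotic expansion, with the errors $(n_j!/(n_j-s_1-s_2)!)/n_j^{s_1+s_2}=1+O(1/n_j)$, $N(n_j)!/(N(n_j)-\beta)!=N(n_j)^\beta(1+O(\beta^2/N(n_j)))$, and $(1-(s_1+s_2)/n_j)^{N(n_j)-\beta}=e^{-(s_1+s_2)N(n_j)/n_j}(1+O(\log n_j/n_j))$ all dominated by $(\log\log n_j)^2/\log n_j$ since $\beta=O(s_2\log n_j)$. The delicate piece is $n_j(N(n_j)/n_j)^k e^{-N(n_j)/n_j}/k!$: applying Stirling to $k!$ and expanding $k\log(N(n_j)/n_j)-N(n_j)/n_j-\log k!$ to second order in the small quantity $\log\log n_j/\log n_j$, one finds that the $k\log\log n_j$ contributions from $k\log(N(n_j)/n_j)$ and from $\log k!$ cancel, the residual coefficient of $\log\log n_j$ vanishes exactly because $k=e\log n_j+((e-1)(m-1)-\tfrac12)\log\log n_j+d_j$, the $f(n_j)/n_j$ terms aggregate to $(e-1)f(n_j)/n_j$, and the constant part equals $-d_j-\tfrac12-\tfrac12\log(2\pi)$. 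Exponentiation recovers $\lambda_k$, with $e^{-d_j}=e^{-d}(1+o(1))$ since $d_j\to d$, while the quadratic Taylor remainders produce the stated error of order $(\log\log n_j)^2/\log n_j$.

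The main obstacle is justifying the term-by-term passage to the limit in the infinite inversion sum, since the power $\lambda_{m-1}^{s_1}\lambda_k^{s_2}$ amplifies the per-factor relative error, giving an implicit constant in the asymptotic for $M_{s_1,s_2}$ that grows roughly linearly in $s_1+s_2$. This is handled by a Bonferroni-type truncation: alternating partial sums up to level $L$ sandwich the true probability, the finite part is evaluated pointwise via the asymptotic, and the tail is bounded uniformly in $n_j$ using the boundedness of $\lambda_{m-1},\lambda_k$ together with the convergence of $\sum(s_1+s_2)\lambda_{m-1}^{s_1}\lambda_k^{s_2}/((s_1-r_1)!(s_2-r_2)!)$. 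Sending $L\to\infty$ after $n_j\to\infty$ completes the argument and leaves the final implied constant depending only on $r_1$ and $r_2$, as required.
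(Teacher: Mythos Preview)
Your proposal is correct and follows essentially the same approach as the paper: the factorial moment $M_{s_1,s_2}$ is exactly $s_1!\,s_2!\binom{n_j}{s_1,s_2,n_j-s_1-s_2}W_{s_1,s_2}(n_j)$ in the paper's notation, your M\"obius inversion is line-for-line the paper's inclusion--exclusion formula \eqref{eqn6}, and your Bonferroni truncation matches the paper's use of \eqref{eqn7}--\eqref{eqn8}. The asymptotic evaluation of the main term, including the Stirling expansion of $k!$ and the cancellation singled out by the specific form of $k$, is also carried out the same way.
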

\begin{proof}
Take some $j\in\mathbb{N}$. Let the probability that $r_1$ prescribed types of coupons are $(m-1)$-tons and $r_2$ prescribed types of coupons are $k$-tons be denoted as $W_{r_1,r_2}(n)$. Then, as $j\rightarrow\infty$, we have
\begin{align}
&\quad\binom{n_j}{r_1,r_2,n_j-r_1-r_2}W_{r_1,r_2}(n_j)\nonumber\\
&=\binom{n_j}{r_1,r_2,n-r_1-r_2}\frac{N\left(n_j,f\left(n_j\right)\right)!}{(m-1)!^{r_1}k!^{r_2}\left(N(n_j)-(m-1)r_1-kr_2\right)!n_j^{(m-1)r_1+kr_2}}\left(1-\frac{r_1+r_2}{n_j}\right)^{N(n_j)-(m-1)r_1-kr_2}\nonumber\\
&=\binom{n_j}{r_1,r_2,n_j-r_1-r_2}\frac{N\left(n_j,f\left(n_j\right)\right)!e^{-(r_1+r_2)N(n_j)/n_j}}{(m-1)!^{r_1}k!^{r_2}\left(N(n_j)-(m-1)r_1-kr_2\right)!n_j^{(m-1)r_1+kr_2}}\left(1+O\left(\frac{\log n_j}{n_j}\right)\right)\nonumber\\
&=\frac{N\left(n_j,f\left(n_j\right)\right)^{(m-1)r_1+kr_2}e^{-\frac{\left(r_1+r_2\right)f\left(n_j\right)}{n_j}}}{r_1!r_2!(m-1)!^{r_1}k!^{r_2}\left(\log n_j\right)^{\left(r_1+r_2\right)(m-1)}n_j^{(m-1)r_1+kr_2}}\left(1+O\left(\frac{\log n_j}{n_j}\right)\right)\nonumber\\
&=\frac{\left(\log n_j\right)^{kr_2+(e-1)(m-1)r_2}e^{-\frac{\left(r_1+r_2(e+1)\right)f\left(n_j\right)}{n_j}}}{r_1!r_2!(m-1)!^{r_1}k!^{r_2}}\left(1+O\left(\frac{\left(\log\log n_j\right)^2}{\log n_j}\right)\right)\nonumber\\
&=\frac{1}{r_1!r_2!}\left(\frac{e^{-\frac{f\left(n_j\right)}{n_j}}}{(m-1)!}\right)^{r_1}\left(\frac{e^{\frac{(e-1)f\left(n_j\right)}{n_j}-d}}{\sqrt{2\pi e}}\right)\left(1+O\left(\frac{\left(\log\log n_j\right)^2}{\log n_j}\right)\right),\label{eqn5}
\end{align}
where the implied constant in the error term only depends on $r_1$ and $r_2$. By the inclusion-exclusion formula, we have
\begin{align}
&\quad P\left(S_{m-1,N\left(n_j,f\left(n_j\right)\right)}=r_1,S_{k,N\left(n_j,f\left(n_j\right)\right)}=r_2\right)\nonumber\\
&=\sum_{0\leq j_1+j_2\leq n_j-r_1-r_2}(-1)^{j_1+j_2}\binom{n_j}{r_1+j_1,r_2+j_2,n_j-r_1-r_2-j_1-j_2}\binom{r_1+j_1}{r_1}\binom{r_2+j_2}{r_2}W_{r_1+j_1,r_2+j_2}\left(n_j\right).\label{eqn6}
\end{align}
For $t\leq n_j-r_1-r_2$ even we have
\begin{align}
&\quad P\left(S_{m-1,N\left(n_j,f\left(n_j\right)\right)}=r_1,S_{k,N\left(n_j,f\left(n_j\right)\right)}=r_2\right)\nonumber\\
&\leq\sum_{0\leq j_1+j_2\leq t}(-1)^{j_1+j_2}\binom{n_j}{r_1+j_1,r_2+j_2,n_j-r_1-r_2-j_1-j_2}\binom{r_1+j_1}{r_1}\binom{r_2+j_2}{r_2}W_{r_1+j_1,r_2+j_2}\left(n_j\right)\label{eqn7}
\end{align}
and for $t\leq n_j-r_1-r_2$ odd we have
\begin{align}
&\quad P\left(S_{m-1,N\left(n_j,f\left(n_j\right)\right)}=r_1,S_{k,N\left(n_j,f\left(n_j\right)\right)}=r_2\right)\nonumber\\
&\geq\sum_{0\leq j_1+j_2\leq t}(-1)^{j_1+j_2}\binom{n_j}{r_1+j_1,r_2+j_2,n_j-r_1-r_2-j_1-j_2}\binom{r_1+j_1}{r_1}\binom{r_2+j_2}{r_2}W_{r_1+j_1,r_2+j_2}\left(n_j\right).\label{eqn8}
\end{align}
Combining \eqref{eqn6}, \eqref{eqn7}, and \eqref{eqn8}, we have
\begin{align*}
&\quad P\left(S_{m-1,N\left(n_j,f\left(n_j\right)\right)}=r_1,S_{k,N\left(n_j,f\left(n_j\right)\right)}=r_2\right)\\
&=\frac{1}{r_1!r_2!}\sum_{0\leq j_1+j_2\leq n_j}\frac{(-1)^{j_1+j_2}}{j_1!j_2!}\left(\frac{e^{-\frac{f\left(n_j\right)}{n_j}}}{(m-1)!}\right)^{r_1+j_1}\left(\frac{e^{\frac{(e-1)f\left(n_j\right)}{n_j}-d}}{\sqrt{2\pi e}}\right)^{r_2+j_2}\left(1+O\left(\frac{\left(\log\log n_j\right)^2}{\log n_j}\right)\right)\\
&=\frac{1}{r_1!r_2!}\left(\frac{e^{-\frac{f\left(n_j\right)}{n_j}}}{(m-1)!}\right)^{r_1}e^{\frac{-e^{-\frac{f\left(n_j\right)}{n_j}}}{(m-1)!}}\left(\frac{e^{\frac{(e-1)f\left(n_j\right)}{n_j}-d}}{\sqrt{2\pi e}}\right)^{r_2}e^{\frac{-e^{\frac{(e-1)f\left(n_j\right)}{n_j}-d}}{\sqrt{2\pi e}}}\left(1+O\left(\frac{\left(\log\log n_j\right)^2}{\log n_j}\right)\right),
\end{align*}
where again the implied constant in the error term only depends on $r_1$ and $r_2$.
\end{proof}
\begin{proof}[Proof of Theorem \ref{thm5}]
We first prove the first part of Theorem \ref{thm5}. Let $d\in\mathbb{R}$ and pick increasing sequences $\left(n_j\right)_j$ and $\left(d_j\right)_j$ such that $\lim_{j\rightarrow\infty}d_j=d$ and $k_j:=e\log n_j+\left((e-1)(m-1)-\frac{1}{2}\right)\log\log n_j+d_j\in\mathbb{N}$ for all $j\in\mathbb{N}$. Let $x<y$ and $r\in\mathbb{N}$ be constants. Consider the probability
\begin{equation*}
P\left(N\left(n_j,n_jx\right)\leq T_m(n)\leq N\left(n_j,n_jy\right),S_k=r\right).
\end{equation*}
We have
\begin{equation*}
P\left(N\left(n_j,n_jx\right)\leq T_m(n)\leq N\left(n_j,n_jy\right),S_k=r\right)=\sum_{M=N\left(n_j,n_jx\right)}^{N\left(n_j,n_jy\right)}P\left(T=M,S_{k,M-1}=r\right).
\end{equation*}
Clearly,
\begin{equation}\label{eqn9}
P\left(T_m(n)=M,S_{k,M-1}=r\right)\leq P\left(S_{m-1,M-1}=1,S_{m-1,M}=0,S_{k,M-1}=r\right)=\frac{P\left(S_{m-1,M-1}=1,S_{k,M-1}=r\right)}{n}.
\end{equation}
Also, for $M\geq N\left(n_j,n_jx\right)$, if we have $S_{i,M}>0$ for some $0\leq i\leq m-2$, then we have $S_{j,N\left(n_l,n_lx\right)}>0$ for some $0\leq l\leq i$. Thus, we can also deduce
\begin{align}
P\left(N\left(n_j,n_jx\right)\leq T_m(n)\leq N\left(n_j,n_jy\right),S_k=r\right)\geq &\left(\sum_{M=N\left(n_j,n_jx\right)}^{N\left(n_j,n_jy\right)}\frac{P\left(S_{m-1,M-1}=1,S_{k,M-1}=r\right)}{n_j}\right)\nonumber\\
&-P\left(\exists\text{ }0\leq l\leq m-2\text{ }S_{l,N\left(n_j,n_jx\right)}>0\right).\label{eqn10}
\end{align}
For any $0\leq l\leq m-2$ notice that as $j\rightarrow\infty$ we have
\begin{align*}
E\left(S_{l,N\left(n_j,n_jx\right)}\right)&=n\cdot P(c_0\text{ occurs exactly }l\text{ times in the collection of }N\left(n_j,n_jx\right)\text{ coupons})\\
&=\binom{N\left(n_j,n_jx\right)}{l}\left(1-\frac{1}{n_j}\right)^{N\left(n_j,n_jx\right)-l}\frac{1}{n_j^{l-1}}\\
&\sim\frac{N\left(n_j,n_jx\right)^l}{l!n_j^{l-1}}e^{-\log n_j-(m-1)\log\log n_j-x}\\
&\sim\frac{(\log n_j)^{l-m+1}e^{-x}}{l!}.
\end{align*}
Since $l\leq m-2$, we thus have
\begin{equation*}
\lim_{j\rightarrow\infty}E\left(S_{l,N\left(n_j,n_jx\right)}\right)=0,
\end{equation*}
and so
\begin{equation}\label{eqn11}
\lim_{j\rightarrow\infty}P\left(\exists\text{ }0\leq l\leq m-2\text{ }S_{l,N\left(n_j,n_jx\right)}>0\right)=0.
\end{equation}
For every $N\left(n_j,n_jx\right)\leq M\leq N\left(n_j,n_jy\right)$, let $c_{M,j}:=\frac{M-n_j\log n_j-(m-1)n_j\log\log n_j}{n_j}$.
By Lemma \ref{lem1}, we have
\begin{align}
&\quad\sum_{M=N\left(n_j,n_jx\right)}^{N\left(n_j,n_jy\right)}\frac{P\left(S_{m-1,M-1}=1,S_{k,M-1}=r\right)}{n}\nonumber\\
&=\sum_{M=N\left(n_j,n_jx\right)}^{N\left(n_j,n_jy\right)}\frac{1}{r!n}\left(\frac{e^{-c_{M,j}}}{(m-1)!}\right)e^{\frac{-e^{-c_{M,j}}}{(m-1)!}}\left(\frac{e^{(e-1)c_{M,j}-d}}{\sqrt{2\pi e}}\right)^re^{\frac{-e^{(e-1)c_{M,j}-d}}{\sqrt{2\pi e}}}\left(1+O\left(\frac{\left(\log\log n_j\right)^2}{\log n_j}\right)\right)\nonumber\\
&<\left(\frac{e^{(e-1)y-d}}{\sqrt{2\pi e}}\right)^re^{\frac{-e^{(e-1)x-d}}{\sqrt{2\pi e}}}\sum_{M=N\left(n_j,n_jx\right)}^{N\left(n_j,n_jy\right)}\frac{1}{r!n}\left(\frac{e^{-c_{M,j}}}{(m-1)!}\right)e^{\frac{-e^{-c_{M,j}}}{(m-1)!}}\left(1+O\left(\frac{\left(\log\log n_j\right)^2}{\log n_j}\right)\right).\label{eqn12}
\end{align}
Note that we have
\begin{align}
\lim_{j\rightarrow\infty}\sum_{M=N\left(n_j,n_jx\right)}^{N\left(n_j,n_jy\right)}\frac{1}{n}\left(\frac{e^{-c_{M,j}}}{(m-1)!}\right)e^{\frac{-e^{-c_{M,j}}}{(m-1)!}}&=\int_x^y\frac{e^{-t}e^{\frac{-e^{-t}}{(m-1)!}}}{(m-1)!}dt\nonumber\\
&=e^{\frac{-e^{-y}}{(m-1)!}}-e^{\frac{-e^{-x}}{(m-1)!}}\nonumber\\
&=\lim_{j\rightarrow\infty}P\left(N\left(n_j,x\right)\leq T_m\left(n_j\right)\leq N\left(n_j,y\right)\right).\label{eqn13}
\end{align}
Combining \eqref{eqn9}, \eqref{eqn10}, \eqref{eqn11}, \eqref{eqn12}, and \eqref{eqn13}, we obtain
\begin{align*}
&\quad\limsup_{j\rightarrow\infty}P\left(N\left(n_j,n_jx\right)\leq T_m\left(n_j\right)\leq N\left(n_j,n_jy\right),S_k=r\right)\\
&\leq\frac{1}{r!}\left(\frac{e^{(e-1)y-d}}{\sqrt{2\pi e}}\right)^re^{\frac{-e^{(e-1)x-d}}{\sqrt{2\pi e}}}\lim_{j\rightarrow\infty}P\left(N\left(n_j,n_jx\right)\leq T_m\left(n_j\right)\leq N\left(n_j,n_jy\right)\right).
\end{align*}
By similar reasoning, we can also obtain
\begin{align*}
&\quad\liminf_{j\rightarrow\infty}P\left(N\left(n_j,n_jx\right)\leq T_m\left(n_j\right)\leq N\left(n_j,n_jy\right),S_k=r\right)\\
&\geq\frac{1}{r!}\left(\frac{e^{(e-1)x-d}}{\sqrt{2\pi e}}\right)^re^{\frac{-e^{(e-1)y-d}}{\sqrt{2\pi e}}}\lim_{j\rightarrow\infty}P\left(N\left(n_j,n_jx\right)\leq T_m\left(n_j\right)\leq N\left(n_j,n_jy\right)\right).
\end{align*}
Since the choices of $x<y$ were arbitrary, we can see that the joint limiting distribution holds.
\newline
\newline
For the second part, again let $x<y$ be constants. We will show that
\begin{equation*}
\lim_{n\rightarrow\infty}P\left(N\left(n,nx\right)\leq T_m\left(n_j\right)\leq N\left(n,ny\right),S_k\geq 1\right)=0,
\end{equation*}
which implies the desired result since $x<y$ are arbitrary. We have
\begin{align*}
&\quad P\left(N\left(n,nx\right)\leq T_m(n)\leq N\left(n,ny\right),S_k\geq 1\right)\\
&\leq\sum_{M=N\left(n,x\right)}^{N\left(n,y\right)}\frac{P\left(S_{m-1,M-1}=1,S_{k,M-1}\geq 1\right)}{n}\\
&\leq\sum_{M=N\left(n,nx\right)}^{N\left(n,ny\right)}\frac{P\left(S_{k,M-1}\geq 1\right)}{n}\\
&\leq\sum_{M=N\left(n,nx\right)}^{N\left(n,ny\right)}\frac{n\cdot P\left(c_0\text{ occurs exactly }k\text{ times in the collection of }M\text{ coupons}\right)}{n}\\
&=\sum_{M=N\left(n,nx\right)}^{N\left(n,ny\right)}P\left(c_0\text{ occurs exactly }k\text{ times in the collection of }M\text{ coupons}\right)\\
&=\sum_{M=N\left(n,nx\right)}^{N\left(n,ny\right)}\binom{M}{k}\left(1-\frac{1}{n}\right)^{M-k}\frac{1}{n^k}\\
&<\frac{1}{k!}\sum_{M=N\left(n,nx\right)}^{N\left(n,ny\right)}\left(1-\frac{1}{n}\right)^{M-k}\frac{M^k}{n^k}\\
&<\frac{n(y-x)}{k!}\left(1-\frac{1}{n}\right)^{N(n,nx)-k}(\log n+(m-1)\log\log n+y)^k.
\end{align*}
As $n\rightarrow\infty$ we have
\begin{align}
&\quad\frac{n(y-x)}{k!}\left(1-\frac{1}{n}\right)^{N(n,nx)-k}(\log n+(m-1)\log\log n+y)^k\nonumber\\
&\sim\frac{(y-x)(\log n+(m-1)\log\log n+y)^k}{k!(\log n)^{m-1}e^x}\nonumber\\
&\sim\frac{y-x}{\sqrt{2\pi e}(\log n)^{m-\frac{1}{2}}e^x}\left(\frac{e\log n+e(m-1)\log\log n+ey}{k}\right)^k\nonumber\\
&\sim\frac{(y-x)e^{\left(m-\frac{1}{2}\right)\log\log n+ey-g(n)}}{\sqrt{2\pi e}(\log n)^{m-\frac{1}{2}}e^x}\nonumber\\
&=\frac{(y-x)e^{ey-x-g(n)}}{\sqrt{2\pi e}}.\label{eqn14}
\end{align}
We have
\begin{equation*}
\lim_{n\rightarrow\infty}\frac{(y-x)e^{ey-x-g(n)}}{\sqrt{2\pi e}}=0,
\end{equation*}
giving us our result.
\newline
\newline
For the third part, again let $x<y$ be constants. We will show that
\begin{equation*}
\lim_{n\rightarrow\infty}P\left(N\left(n,nx\right)\leq T_m(n)\leq N\left(n,ny\right),S_k=0\right)=0,
\end{equation*}
which implies the desired result since $x<y$ are arbitrary. We have
\begin{align}
P\left(N\left(n,nx\right)\leq T_m(n)\leq N\left(n,ny\right),S_k=0\right)&\leq\sum_{M=N\left(n,nx\right)}^{N\left(n,ny\right)}\frac{P\left(S_{m-1,M-1}=1,S_{k,M-1}=0\right)}{n}\nonumber\\
&\leq\sum_{M=N\left(n,nx\right)}^{N\left(n,ny\right)}\frac{P\left(S_{k,M-1}=0\right)}{n}.\label{eqn15}
\end{align}
Note that we have
\begin{equation*}
E\left(S_{k,M}\right)=\binom{M}{k}\left(1-\frac{1}{n}\right)^{M-k}\frac{1}{n^{k-1}}
\end{equation*}
and
\begin{align*}
E\left(S_{k,M}^2\right)&=\binom{M}{k}\binom{M-k}{k}\left(1-\frac{2}{n}\right)^{M-2k}\frac{(n-1)}{n^{2k-1}}+\binom{M}{k}\left(1-\frac{1}{n}\right)^{M-k}\frac{1}{n^{k-1}}\\
&<\binom{M}{k}^2\left(1-\frac{1}{n}\right)^{2M-4k}\frac{1}{n^{2k}}+\binom{M}{k}\left(1-\frac{1}{n}\right)^{M-k}\frac{1}{n^{k-1}}.
\end{align*}
We have
\begin{align*}
Var\left(S_{k,M}\right)=E\left(S_{k,M}^2\right)-E\left(S_{k,M}\right)^2\leq E\left(S_{k,M}\right)^2\left(\left(1-\frac{1}{n}\right)^{-2k}-1\right)+E\left(S_{k,M}\right).
\end{align*}
Since $k<n$ for sufficiently large $n$, we have that there exists $C_1>0$ such that for sufficiently large $n$, we have
\begin{equation*}
Var\left(S_{k,M}\right)\leq\frac{C_1kE\left(S_{k,M}\right)^2}{n}+E\left(S_{k,M}\right).
\end{equation*}
Similarly to how we derived \eqref{eqn14}, we can deduce that there exists $C_2>0$ depending only on $x$ and $y$ such that for sufficiently large $n$, we have $E\left(S_{k,M}\right)<C_2e^{g(n)}<C_2\log n$. Therefore, for sufficiently large $n$, we have
\begin{equation*}
Var\left(S_{k,M}\right)\leq E\left(S_{k,M}\right)+1<2E\left(S_{k,M}\right).
\end{equation*}
Thus, for sufficiently large $n$,
\begin{equation*}
\sigma\left(S_{k,M}\right)<\sqrt{2E\left(S_{k,M}\right)}.
\end{equation*}
Thus, by Chevyshev's Inequality, we have
\begin{align}
P\left(S_{k,M}=0\right)&\leq P\left(\left|S_{k,M}-E\left(S_{k,M}\right)\right|\geq E\left(S_{k,M}\right)\right)\nonumber\\
&\leq P\left(\left|S_{k,M}-E\left(S_{k,M}\right)\right|\geq\frac{\sigma\left(S_{k,M}\right)\sqrt{E\left(S_{k,M}\right)}}{\sqrt{2}}\right)\nonumber\\
&\leq\frac{2}{E\left(S_{k,M}\right)}\label{eqn16}
\end{align}
for sufficiently large $n$. As well, similarly to how we derived \eqref{eqn14}, we have that there exists $C_3>0$ depending on only $x$ and $y$ such that $C_3e^{g(n)}<E\left(S_{k,M}\right)$. Thus, from \eqref{eqn15} and \eqref{eqn16}, we have
\begin{equation*}
P\left(N\left(n,nx\right)\leq T_m(n)\leq N\left(n,ny\right),S_k=0\right)\leq\sum_{M=N\left(n,nx\right)}^{N\left(n,ny\right)}\frac{2}{C_3ne^{g(n)}}=\frac{2(y-x)}{C_3e^{g(n)}}
\end{equation*}
for sufficiently large $n$. Since $\lim_{n\rightarrow\infty}g(n)=\infty$, we have our result.
\end{proof}
\section{Future Work}
There are a few open questions that are worth exploring with respect to the coupon collector's problem and the number of $k$-tons. For instance, can we extend our ranges for $k$? We have results for $k=o(\log n)$ if $m=1$ and $k=o\left(\frac{\log n}{\log\log n}\right)$ if $m\geq 2$, as well as $k=e\log n+\left((e-1)(m-1)-\frac{1}{2}\right)\log\log n+d$, but we can also ask similar questions for values of $k$ between these minimum and maximum values. For instance, we can consider $k=\theta(\log n)$ or $k=\theta\left(\frac{\log n}{\log\log n}\right)$. We can also ask what happens if we have $m$ increasing with $n$. Also, Doumas and Papanicolaou \cite{doumas} studied the limiting distribution of $T_m(n)$ in the case of unequal coupon probabilities. We could also study the limiting distribution of $k$-tons in the case of unequal coupon probabilities.
\section{Acknowledgements}
The research of J.C. Saunders is supported by an Azrieli International Postdoctoral Fellowship, as well as a Postdoctoral Fellowship at the University of Calgary. The author would also like to thank Dr. Daniel Berend for helpful comments on the paper.

\end{document}